\newtheorem{theorem}{Theorem}
\newtheorem{lemma}[theorem]{Lemma}
\newcommand{\C}{C}
\newcommand{\bE}{\bar{\mathcal E}}
\newcommand{\Wce}{W_{\mathtt{CE}}}
\newcommand{\f}{\frac}
\renewcommand{\P}{\mathbf{P}}
\newcommand{\ind}[1]{\mathbf{1}{\{ #1 \}}}
\newcommand{\E}{\mathbf E}
\newcommand{\B}{\mathcal B}
\DeclareMathOperator{\Exp}{Exp}
\newcommand{\ta}[1]{}%
\newcommand{\om}[1]{}%
\definecolor{darkgreen}{rgb}{0,0.5,0}
\title{Chase-escape with conversion on the complete graph}
\author[]{Matthew Junge} \address{Baruch College}  \email{Matthew.Junge@baruch.cuny.edu}
\author[]{Sergio Rodr\'{i}guez} \address{University of Puerto Rico, Rio Piedras Campus} \email{sergio.rodriguez20@upr.edu}
\begin{document}

\begin{abstract}
We prove that chase-escape with conversion on the complete graph undergoes a phase transition at equal fitness and derive simple asymptotic formulas for the extinction probability, the total number of converted sites, and the expected number of surviving sites.
\end{abstract}

\thanks{}

\maketitle

\section{Introduction}

\emph{Chase-escape with conversion}  takes place on a graph with vertices initially in either a red, blue, or white state. White vertices change to red according to independent rate-$\lambda$ Poisson processes along each edge leading to a red vertex. Red vertices change to blue according to independent rate-$1$ Poisson processes along each edge leading to a blue vertex. Additionally, each red vertex changes to blue according to an independent Poisson process with rate $\alpha \geq 0$. Blue is a terminal state.  

Bailey et al.\ introduced the model to describe central nervous system damage from multiple sclerosis \cite{bailey2025chase}. In their interpretation, white sites represent healthy neurons, red sites have inflammatory activity, and blue sites regulatory activity. These dynamics generalize the \emph{chase-escape} process, in which $\alpha =0$, introduced by ecologists in the 1990s to model parasite-host relations \cite{rand1995invasion}. Chase-escape is challenging to analyze and admits many interpretations, including predator-prey interactions, rumor scotching, infection spread, and malware repair. 
See \cite{kordzakhi2006two,rumor, cande, kumar2021chase, hinsen2020phase, beckman2021chase} for overviews.

The \emph{standard initial conditions} for chase-escape with conversion consist of a red root vertex and all other sites white. The model \emph{fixates} at the moment no red sites remain. 
Bailey et al.\ 
studied monotonicity properties for \emph{total damage} i.e., the number of remaining blue sites when the process fixates. Among other findings, they proved that the total damage is monotone in the parameters for the process on the complete graph on $n$ vertices, which we denote by $K_n$ \cite[Theorem 1]{bailey2025chase}.

Kortchemski conducted a detailed analysis of chase-escape on $K_{n+2}$ with an initial condition of one red, one blue, and $n$ white sites \cite{complete}. Let $\Wce= \Wce(K_{n+2},\lambda)$ be the number of remaining white sites when the process fixates. We say that \emph{extinction} occurs when $\Wce = 0$. In \cite[Theorem 1]{complete}, he proved that
\begin{align} \label{eq:KW}
\lim_{n \to \infty} \P(\Wce(K_{n+2},\lambda) =0 )  = \begin{cases} 0, &\lambda <1 \\ 1/2, & \lambda =1 \\ 1, & \lambda >1 \end{cases}. 
\end{align}
Thus, the extinction event undergoes a phase-transition at ``equal fitness" when both red and blue spread at rate $1$.

It is natural to ask what impact conversion has on the phase-transition. Returning to the standard initial conditions, let $W=W(K_{n+1},\lambda,\alpha)$ be the number of remaining white sites when chase-escape with conversion fixates. The three theorems below hold for all $\alpha >0$ and the limits are as $n \to \infty$. 
\begin{theorem}\thlabel{thm:W} %
$\displaystyle \P(W(K_{n+1}, \lambda, \alpha)=0 )  \to \begin{cases} 0, &\lambda <1 \\ 1/2^\alpha, & \lambda =1 \\ 1, & \lambda >1 \end{cases}$. %

\end{theorem}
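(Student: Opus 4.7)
\emph{Plan.} The proof hinges on an exact coupling with two \emph{independent} processes. The embedded jump chain of the original process on $(W_t, B_t)$ has transition probabilities depending on $R_t$ only through a common rate factor that cancels, so, setting $\widetilde B_t := B_t + \alpha$, we may replace the dynamics by: (i) each of the $n$ initial whites carries an independent $\Exp(\lambda)$ clock governing its death, giving $W_t = \#\{w : \tau_w > t\}$; and (ii) $\widetilde B_t$ is a pure-birth chain on $\{\alpha, \alpha+1, \ldots\}$ with rate $\widetilde B$ out of state $\widetilde B$, started at $\alpha$. These two components are independent. Since $R_t = 1 + (n - W_t) - (\widetilde B_t - \alpha)$, extinction ($W = 0$) is exactly the event $\{W_t + \widetilde B_t \leq n + \alpha \text{ for all } t \leq \tau_n\}$, where $\tau_n := \max_w \tau_w$.

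Two asymptotic inputs drive the analysis. First, $\tau_n = \lambda^{-1}(\log n + G_n)$ with $G_n$ converging in distribution to a standard Gumbel, so $e^{\tau_n} \sim n^{1/\lambda} e^{G_n/\lambda}$. Second, $(\widetilde B_t e^{-t})$ is a positive martingale of mean $\alpha$ converging a.s.\ to $\Gamma \sim \mathrm{Gamma}(\alpha, 1)$ (a direct Laplace-transform calculation; for integer $\alpha$, the classical Yule decomposition into $\alpha$ lineages with independent $\Exp(1)$ limits gives the same Gamma sum). Combining, $\widetilde B_{\tau_n} \approx n^{1/\lambda} e^{G_n/\lambda}\Gamma$. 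If $\lambda > 1$, then $\widetilde B_{\tau_n} = o(n)$, and the deterministic envelope $n e^{-\lambda t} + e^t \Gamma$ is bounded away from $n + \alpha$ (its minimum is of order $n^{1/(\lambda+1)} \ll n$), so the constraint holds throughout and $\P(\mathrm{ext}) \to 1$. If $\lambda < 1$, then $\widetilde B_{\tau_n} \sim n^{1/\lambda}\Gamma \gg n + \alpha$ in probability, so $\P(\mathrm{ext}) \to 0$.

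In the critical case $\lambda = 1$ the endpoint constraint $\widetilde B_{\tau_n} \leq n + \alpha$ becomes $e^{G_n}\Gamma \leq 1 + o(1)$. Since $e^{-G_n} \Rightarrow E \sim \Exp(1)$ asymptotically independently of $\Gamma$, the limit is $\P(\Gamma \leq E)$; and because $\Gamma/(\Gamma + E) \sim \mathrm{Beta}(\alpha, 1)$ has density $\alpha x^{\alpha - 1}$ on $[0,1]$, this evaluates to $\int_0^{1/2} \alpha x^{\alpha-1}\,dx = 2^{-\alpha}$, matching the theorem. The \textbf{main obstacle} is upgrading this endpoint statement to the full ``throughout $t \leq \tau_n$'' event. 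On the deterministic envelope these coincide, since $n e^{-t} + e^t \Gamma$ reaches $n + \alpha$ only near $t = \log(n/\Gamma)$ and attains its minimum $2\sqrt{n\Gamma}$ elsewhere. Stochastic control is needed in two windows: the short-time window $t = O(1)$, where $Z_t := W_t + \widetilde B_t - (n+\alpha)$ starts at $0$ with per-event up-probability $O(1/n)$, requiring a small excursion estimate showing $Z_t$ drifts deeply negative before any excursion to $+1$; and the crossover window $|t - \log(n/\Gamma)| = O(1)$, handled by concentration of $\widetilde B_t e^{-t}$ around $\Gamma$ and of $W_t$ around $n e^{-t}$.
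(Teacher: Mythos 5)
Your proposal is correct and takes essentially the same route as the paper: the same reduction to independent death and defective-birth processes, the same terminal values ($\mathrm{Gamma}(\alpha,1)$ for the birth process versus $\Exp(1)$ for the death process --- your Gumbel variable $e^{-G_n}$ is exactly the paper's terminal value $\bE$ of the time-reversed death process), and the same identification of the critical limit as $\P(G_\alpha < \bE) = 2^{-\alpha}$ (your Beta$(\alpha,1)$ computation and the paper's direct integral are interchangeable). The pathwise control you single out as the main obstacle is precisely the portion the paper delegates to Kortchemski's argument, so your sketch there matches the intended proof rather than revealing a gap.
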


Hence $\alpha$ changes the extinction probability but not the location of the phase transition. The intuitive reason for this is as follows. At first, chase and conversion occur infrequently because red has many white vertices to escape to. As the number of white sites begins to dwindle, red growth slows then reverses. Chase then becomes the dominant force, since its effect is dictated by the  product of the red and blue populations. All the while, conversion is linear in the red population size and, thus, never the main driver. See Figure~\ref{fig:sim} for a sample realization. To quantify its effect, we prove that the number of converted sites scales like $\alpha \log n$. Let $\C=\C(K_{n+1}, \lambda ,\alpha)$ be the number of blue sites at fixation that turned blue from conversion.

\begin{theorem}\thlabel{thm:C}
    $\C(K_{n+1}, 1, \alpha)/\log n \to \alpha$ in probability. %
\end{theorem}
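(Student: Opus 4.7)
The plan is a two-stage reduction: first, a compensator argument reduces Theorem~\thref{thm:C} to concentration of the total red-time $\tau := \int_0^T R_s\,ds$ around $\log n$ (where $T$ denotes the fixation time); second, a time change to ``intrinsic time'' exposes a Yule-with-immigration structure that makes this concentration accessible.

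For the reduction, observe that conversion events at each red vertex are driven by independent rate-$\alpha$ Poisson clocks, so the counting process $C_t$ of conversions by time $t$ has compensator $\alpha\int_0^t R_s\,ds$. Hence $M_t := C_t - \alpha\int_0^t R_s\,ds$ is a martingale whose predictable quadratic variation equals its compensator. Optional stopping at $T$ yields $E[C] = \alpha\, E[\tau]$ and $E[(C-\alpha\tau)^2] = \alpha\, E[\tau]$. Consequently, if $E[\tau] = O(\log n)$ and $\tau/\log n \to 1$ in probability, Chebyshev's inequality gives $|C-\alpha\tau| = o_P(\log n)$, whence $C/\log n \to \alpha$ in probability.

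To analyze $\tau$, introduce the intrinsic time $s(t) := \int_0^t R_u\,du$. Under this change, $(R(s),B(s))$ is a continuous-time Markov chain on $\mathbb{Z}_{\geq 0}^2$ with transitions $(R,B)\to(R+1,B)$ at rate $W = n+1-R-B$ and $(R,B)\to(R-1,B+1)$ at rate $B+\alpha$, and $\tau$ is the first intrinsic time at which $R$ hits $0$. Crucially, the $B$-marginal has rate $B+\alpha$ depending only on $B$, so $B(\cdot)$ in intrinsic time is a Yule process with immigration of rate $\alpha$; by the standard martingale argument, $B(s)e^{-s} \to Y$ almost surely with $Y \sim \mathrm{Gamma}(\alpha,1)$. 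For the upper bound on $\tau$, once $B(s) > (1+\varepsilon)(n+1)/2$ the birth rate $W$ is dominated by the death rate $B+\alpha$ by order $n$, so $R$ extinguishes within $O_P(1)$ additional intrinsic time; combined with $B(s)e^{-s}\to Y>0$ a.s., this gives $\tau \leq \log n + O_P(1)$, and standard tail control on $Y$ yields $E[\tau] = O(\log n)$. For the lower bound, we show $R(s)\geq 1$ for all $s\leq \log n - K$ with probability tending to $1$ as $K\to\infty$: on the likely event $B(s) < (1-\delta)(n+1)/2$, the quasi-equilibrium $R^* := n+1-2B-\alpha$ of the birth-death chain $R$ is of order $n$, and a drift/Lyapunov argument shows $R(s)$ tracks $R^*(s)$ up to $O(\sqrt{n})$-fluctuations, hence never reaches $0$. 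Combining the two bounds yields $\tau = \log n + O_P(1)$, which is stronger than needed.

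The principal obstacle is the lower bound: ruling out premature extinction of the birth-death chain $R(s)$ in its slowly-varying environment $B(s)$. A particularly delicate phase is the initial growth window $s\in[0,O(1)]$, during which $R$ must climb from $1$ to $\Omega(n)$ against competing conversion events of rate $\alpha$. This is handled by comparison with a supercritical birth chain, using that while $B = O(1)$ the birth rate $W\approx n$ overwhelms the death rate $\alpha$, so $R$ reaches $\Omega(n)$ in $O(1)$ intrinsic time with probability approaching $1$; standard Lyapunov estimates then apply throughout the plateau.
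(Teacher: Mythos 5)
Your proposal is correct in outline and its skeleton matches the paper's: both reduce $\C$ to $\alpha\tau$, where $\tau$ is the fixation time measured in the ``intrinsic'' clock $s(t)=\int_0^t R_u\,du$ (this is exactly the jump-time clock of the paper's birth and death processes), and both then need $\tau/\log n\to 1$. Your compensator/optional-stopping step is a clean substitute for the paper's observation that, in intrinsic time, conversions are the births of the defective progenitor and hence a rate-$\alpha$ Poisson process evaluated at $\tau$; your Yule-with-immigration limit $B(s)e^{-s}\to Y\sim\mathrm{Gamma}(\alpha,1)$ is precisely \thref{lem:tvB}. Where you genuinely diverge is the concentration of $\tau$: the paper imports this from \cite[Equation (11) and Lemma 3.1]{complete}, and the reason that import is cheap is a structural fact you do not exploit — in intrinsic time the white and blue marginals are \emph{independent} autonomous processes (a pure death process at rate $W$ and the defective birth process at rate $B+\alpha$), so $R(s)=n+1-W(s)-B(s)$ with $W(s)\approx ne^{-s}$ and $B(s)\approx Ye^{s}$, and both the upper and lower bounds on the hitting time of $R=0$ read off directly from the two terminal-value lemmas. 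By instead treating $(R,B)$ as a coupled two-dimensional chain and running a drift/Lyapunov analysis on $R$ around the moving quasi-equilibrium $R^*=n+1-2B-\alpha$, you take on the hardest part of the argument (the lower bound, including the initial climb from $R=1$) by hand; your sketch is plausible and I see no step that fails, but ``standard Lyapunov estimates'' over a window of length $\log n$ with a time-varying environment is real work, whereas the independence decomposition makes it nearly immediate. Two small additional remarks: the $B$-marginal is Yule-with-immigration only up to fixation, so the almost-sure limit should be stated for the unstopped process; and $\E[\tau]=O(\log n)$ follows at once from $\tau\le\beta(n)$ and $\E[\beta(n)]=\sum_{i=0}^{n-1}(i+\alpha)^{-1}\sim\log n$, with no need for tail control on $Y$.
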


\begin{figure}
    \centering
    \includegraphics[width=0.9\linewidth]{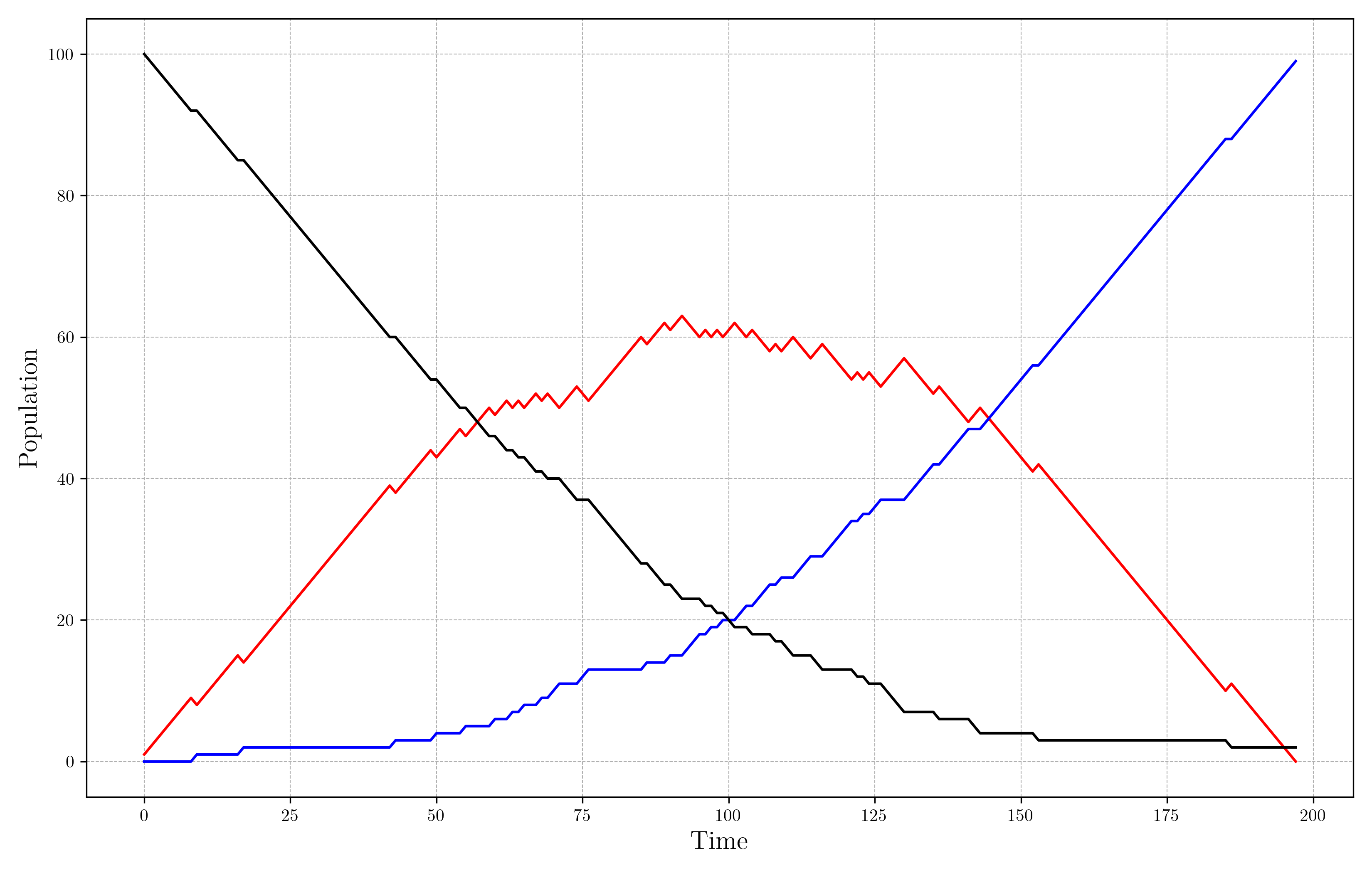}
    \caption{Red, white, and blue population sizes at the jump times in chase-escape with conversion on $K_{101}$ with $\lambda = 1$ and $\alpha=4$. In this simulation $W(K_{101},1,4) = 2$.}
    \label{fig:sim}
\end{figure}

Regarding the expected value, \cite[Theorem 5]{complete} proved that $\E[\Wce(K_{n+2},1)] \to 2.$ We extend this to our setting.

\begin{theorem}\thlabel{thm:EW}
   $\E[W(K_{n+1}, 1, \alpha)] \to 2\alpha$. %
\end{theorem}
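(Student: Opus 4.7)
The plan is to reduce $\E[W]$ to an expectation involving the embedded jump chain of the red population and then to exploit the random-walk machinery naturally associated with that chain. Let $(R_k, W_k, B_k)$ be the red, white, and blue counts after the $k$-th jump of CEC on $K_{n+1}$. Every jump lowers $W_k - B_k$ by exactly one (a birth drops $W$ and leaves $B$ unchanged; any red death raises $B$ and leaves $W$ unchanged), so $W_k - B_k = n - k$, and combined with $W_k + B_k + R_k = n + 1$ this gives the identity
\begin{equation*}
W_k = \tfrac{2n + 1 - R_k - k}{2}.
\end{equation*}
At the fixation time $\tau = \min\{k : R_k = 0\}$, this forces $W = (2n + 1 - \tau)/2$, so the theorem is equivalent to showing $\E[\tau] = 2n + 1 - 4\alpha + o(1)$.

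Next, I would use exchangeability and Poisson thinning to obtain a tractable formula for $\E[W]$. By symmetry of the $n$ non-root vertices, $\E[W] = n\,\P(v \text{ is white at fixation})$ for any fixed non-root $v$. On the event that $v$ stays white, the process restricted to $K_{n+1} \setminus \{v\}$ is distributed as CEC on $K_n$ started from one red and $n - 1$ white, so thinning the rate-$1$ infection clocks on the edges incident to $v$ against this reduced trajectory yields
\begin{equation*}
\P(v \text{ white}) = \E\bigl[e^{-L}\bigr], \qquad L = \int_0^\infty R'(t)\,dt,
\end{equation*}
where $R'(t)$ is the red count in CEC on $K_n$. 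The inter-jump times of the embedded chain $(R'_k)$ are $\mathrm{Exp}(R'_k(n - R'_k + \alpha))$, and integrating them out gives the product representation
\begin{equation*}
\E[e^{-L}] = \E\!\left[\prod_{k=0}^{\tau'-1} \frac{n - R'_k + \alpha}{n - R'_k + \alpha + 1}\right],
\end{equation*}
with $\tau'$ the hitting time of $0$ for $(R'_k)$. The theorem becomes $n\,\E[e^{-L}] \to 2\alpha$.

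For the asymptotic evaluation, I would show that $L - \log n$ converges in distribution to a nondegenerate limit $Z$ with $\E[e^{-Z}] = 2\alpha$, and upgrade to Laplace-transform convergence by a uniform-integrability argument using moment bounds on the red trajectory. The $\log n$ scale is forced by \thref{thm:C}, since $\E[C] = \alpha\,\E[L]$ and $C/\log n \to \alpha$ in probability. The limit $Z$ should be identified via a branching-process approximation of $(R'_k)$ in its late, sub-critical phase, where a single red vertex effectively dies at rate roughly $\alpha$ plus order-one corrections.

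The main obstacle is isolating the exact constant $2\alpha$ rather than just the correct order. A possibly cleaner alternative is the following. For integer $\alpha$, the transition rates of $(R_k, W_k)$ in CEC on $K_{n+1}$ with parameter $\alpha$ match those of standard chase-escape on $K_{n+1+\alpha}$ started from one red, $\alpha$ blue, and $n$ white: in both processes red gains at rate $R W$ and loses at rate $R(n + 1 - R - W + \alpha)$. So a mild extension of Kortchemski's \cite[Theorem 5]{complete} to $\alpha$ initial blue sites, obtainable by the same method, handles integer $\alpha$. General $\alpha$ would then follow by combining the monotonicity of $\E[W]$ in $\alpha$ (implicit in \cite[Theorem 1]{bailey2025chase}) with a continuity-in-$\alpha$ estimate that is uniform in $n$; establishing this uniform continuity is the delicate step, since bare monotonicity alone leaves an $O(1)$ gap between consecutive integers.
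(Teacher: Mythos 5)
Your structural reductions are sound: the identity $W=(2n+1-\tau)/2$ for the jump chain, the exchangeability/thinning formula $\E[W]=n\,\E[e^{-L}]$ with $L=\int_0^\infty R'(t)\,dt$, the product representation over the jump chain, and the observation that for integer $\alpha$ the process coincides with ordinary chase-escape on $K_{n+1+\alpha}$ started with $\alpha$ blue sites (this last point is essentially the paper's own key observation, stated there via a defective rate-$\alpha$ progenitor so that it works for all real $\alpha$). But neither of your routes actually produces the constant $2\alpha$, and the first route contains a step that is not merely unproven but false. The limit has two contributions of size $\alpha$ each: an atom at $W=n$ from instant conversion of the root, with $n\,\P(W=n)=n\alpha/(n+\alpha)\to\alpha$, and a ``typical'' contribution from the $O(1)$ survivors when the process runs its full course. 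In your notation the atom is an event of probability $\sim\alpha/n$ on which $e^{-(L-\log n)}$ is of order $n$; a uniformly integrable family cannot contribute a positive amount to the limit through events of vanishing probability, so the uniform integrability you invoke fails, and the program ``$L-\log n\Rightarrow Z$ plus UI'' would converge to the wrong constant unless you first split off the atom --- which is exactly the first step of the paper's proof and is absent from yours.

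The deeper gap is that you never identify $Z$ or compute any quantity equal to $2\alpha$; in both routes the constant is asserted rather than derived. The paper's argument is: after removing the instant-conversion atom (contributing $\alpha$), Kortchemski's proof of \cite[Theorem 5(ii)]{complete} goes through verbatim except that the terminal value of the now-defective birth process is $G_\alpha\sim\mathrm{Gamma}(\alpha,1)$ rather than a unit exponential (\thref{lem:tvB}); this gives a limiting contribution $\E[Z]$ with $Z=[1+\mathrm{Poisson}(G_\alpha-\bE)]\ind{G_\alpha>\bE}$, and the two appendix integrals $\P(G_\alpha>\bE)=1-2^{-\alpha}$ and $\E[(G_\alpha-\bE)\ind{G_\alpha>\bE}]=\alpha-(1-2^{-\alpha})$ yield $\E[Z]=\alpha$. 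Your second route, if executed, would have to reproduce exactly this Gamma-versus-exponential comparison for integer $\alpha$, and you correctly flag that interpolating to non-integer $\alpha$ via monotonicity alone leaves an $O(1)$ gap you cannot close; the paper avoids interpolation entirely because the defective-progenitor construction and its $\mathrm{Gamma}(\alpha,1)$ terminal value make sense for every real $\alpha>0$.
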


In all three theorems $\alpha$ has a subtle, yet surprisingly simple effect at criticality.
The reason, which was missed in \cite[Remark 10]{bailey2025chase}, is that we may view chase-escape with conversion on $K_{n+1}$ as an inhomogeneous version of chase-escape on $K_{n+2}$ for which the initially blue site chases red at rate $\alpha$ rather than rate $1$. Sites that turn blue later on chase at rate $1$. This observation explains why the formulas in \thref{thm:W} and \thref{thm:EW} agree when $\alpha =1$, since the two processes are in fact equivalent. 

Our theoretical contribution is making the aforementioned observation and then fitting it into Kortchemski's machinery to find the deeper reason for the simple formulas. Namely, a key terminal value, that was a unit exponential random variable in \cite{complete}, now follows a $\text{Gamma}(\alpha,1)$ distribution.  
Note that \cite{complete} contains more results than presented here. Most should be possible to extend to include conversion. We chose to focus on what we thought were conversion's most notable consequences.

\section{Overview}
Section~\ref{sec:setup} begins by describing the evolution of the different populations in chase-escape then explains why these quantities are equivalent to the counts in independent death and birth processes. Differing from \cite{complete}, the birth process we consider begins with a single defective individual who has birth rate $\alpha$ (\cite{bailey2025chase} interpreted this as constant immigration). We show in \thref{lem:tvW} and \thref{lem:tvB} that both processes have terminal values $\bE\sim \Exp(1)$ and $G_\alpha\sim \text{Gamma}(\alpha,1)$ that determine the limiting growth rates. Section~\ref{sec:proofs} explains why the arguments in \cite{complete} continue to apply but with comparisons between $\bE$ and $G_\alpha$. The appendix has two integral calculations needed for the comparisons. 

\section{Setup} \label{sec:setup}

 \subsection{Chase-escape with conversion}
 We describe the population dynamics of chase-escape with conversion on $K_{n+1}$. Let $(R_t)_{t \in \mathbb N}$ be the number of vertices in state $r$ at time $t$. Note that $R_0 = 1$. Similarly, let $(B_t)_{t \in \mathbb N}$ be the number of blue vertices with $B_0 = 0$. Let $W_t = n+1 - R_t - B_t$ be the number of vertices in state $w$. Conditional on $R_t = r$, $B_t=b$, and $W_t = w = n+1 - (r +b)$, we have the following transition probabilities at the next jump time:
\begin{align}\label{eq:p}
\f{ \lambda w r}{\lambda r w + b r + \alpha r} \quad \text{ and } \quad   \f{br +\alpha r}{\lambda r w + b r + \alpha r},
\end{align}
 where the first is the probability the red population increases by 1 and the second is the probability the red population decreases by one while the blue population increases by 1. Notice that the $r$ factor is common to all terms and can be canceled. %

\subsection{Birth and death processes}

Following \cite{complete, bailey2025chase}, we now define birth and death processes with the same transition probabilities as chase-escape with conversion.
The \emph{death process} starts with $n$ individuals, each of which die at exponential rate $\lambda$. Let $\mathcal W_t$ denote the number of individuals remaining at time $t$. Let $\delta(i)$ denote the time (in continuous units) of the $i$th death. The \emph{defective birth process} starts with 1 individual who generates an additional individual at exponential rate $\alpha$. All future offspring generate additional children at exponential rate 1. Let $\mathcal B_t$ denote the number of individuals at time $t$. Let $\beta(i)$ denote the time of the $i$th jump in the birth process. 
By construction,  $\delta(i+1) - \delta(i) \sim \Exp( \lambda (n-i))$ for $0 \leq i \leq n-1$ and $\beta(i+1) - \beta(i) \sim \Exp(i + \alpha)$ for $i \geq 0$.

As the times between jumps are exponential, it is easy to verify that $\mathcal W_t$ and $\mathcal B_t$ have the same transition probabilities \eqref{eq:p} as $B_t$ and $W_t$ at the jump times. Hence the two processes can be coupled at their jump times up until no white or red sites remain.

\subsection{Terminal values}

As discussed in \cite[Section 1.2]{complete}, the reversal $\overline {\mathcal W}_t := \mathcal W_{\delta(n) - t}$ gives a pure birth process with birth rate $\lambda$ stopped just before its $n$th jump time. As $n\to \infty$,  $\overline {\mathcal W}_t$ converges to the pure birth process  $\overline{\mathcal Y}_t$ with rate $\lambda$ that starts with one individual. The classical result \cite[Theorem I]{kendall1966branching} states that $\overline{\mathcal Y}_t$ has a limiting \emph{terminal value} that dictates its growth. We record this as a lemma. 
\begin{lemma}\thlabel{lem:tvW}
    $e^{-\lambda t} \overline {\mathcal Y}_t \overset{a.s.}{\to} \bE$ with $\bE \sim \Exp(1)$.
\end{lemma}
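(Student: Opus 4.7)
The plan is to combine Doob's martingale convergence theorem with a direct calculation of the one-dimensional marginals of $\overline{\mathcal Y}_t$.

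First, I would verify that $M_t := e^{-\lambda t}\,\overline{\mathcal Y}_t$ is a nonnegative mean-one martingale in the natural filtration. Using the explicit construction with inter-arrival times $\tau_{k+1} - \tau_k \sim \Exp(k\lambda)$, this is the standard Malthusian martingale for the Yule process: the generator identity $\mathcal L f(k) = \lambda k (f(k+1) - f(k))$ applied to the linear function $f(k) = k$ gives $\mathcal L f(k) = \lambda k$, which exactly cancels the time derivative of the prefactor $e^{-\lambda t}$. Since $M_t \geq 0$, Doob's convergence theorem yields an almost sure limit $M_\infty$ with $\E[M_\infty] \leq 1$.

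Second, I would compute the law of $\overline{\mathcal Y}_t$ in closed form to identify this limit. Either by solving the forward Kolmogorov equations $p_k'(t) = \lambda(k-1) p_{k-1}(t) - \lambda k p_k(t)$ with $p_1(0) = 1$, or by induction on the jump times using the memoryless property of the $\Exp(k \lambda)$ inter-arrivals, one obtains the geometric law $\P(\overline{\mathcal Y}_t = k) = e^{-\lambda t}(1 - e^{-\lambda t})^{k-1}$ for $k \geq 1$. Consequently, for each fixed $x > 0$,
\begin{equation*}
\P(M_t \leq x) \;=\; 1 - (1 - e^{-\lambda t})^{\lfloor x e^{\lambda t}\rfloor} \;\longrightarrow\; 1 - e^{-x}
\end{equation*}
as $t \to \infty$. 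Combining this weak convergence with the almost sure convergence $M_t \to M_\infty$ forces $M_\infty \sim \Exp(1)$ by uniqueness of the distributional limit.

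The only delicate step is the marginal computation for the Yule process; everything else is essentially bookkeeping. Since the result is classical and already attributed in the excerpt to \cite{kendall1966branching}, I would invoke it and focus the exposition on the identification of the limit as $\Exp(1)$, which is what pins down the notation $\bE$ used in later sections. As a sanity check, one can directly verify $\E[M_t] = 1$ for all $t$, so the convergence is in $L^1$ and $\E[\bE] = 1$, matching the $\Exp(1)$ law.
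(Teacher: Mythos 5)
Your proposal is correct, but it does something the paper does not: the paper offers no proof of this lemma at all, simply citing Kendall's classical result \cite[Theorem I]{kendall1966branching} and recording it as a lemma. Your self-contained argument --- the Malthusian martingale $M_t = e^{-\lambda t}\overline{\mathcal Y}_t$ converging a.s.\ by Doob, plus the explicit geometric marginal $\P(\overline{\mathcal Y}_t = k) = e^{-\lambda t}(1-e^{-\lambda t})^{k-1}$ forcing the limit law to be $\Exp(1)$ --- is the standard proof and is sound; combining the a.s.\ limit with the weak limit to identify the distribution of $M_\infty$ is exactly the right move. One caveat: your closing sanity check, that $\E[M_t]=1$ for all $t$ ``so the convergence is in $L^1$,'' is a non sequitur --- a nonnegative mean-one martingale can converge a.s.\ to something with smaller mean (indeed to $0$). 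The $L^1$ convergence does hold here, but the honest justification is uniform integrability via the $L^2$ bound $\E[M_t^2] = 2 - e^{-\lambda t} \le 2$; alternatively, it is already a consequence of your identification $M_\infty \sim \Exp(1)$, since then $\E[M_\infty]=1=\lim_t \E[M_t]$ and Scheff\'e's lemma applies. None of this affects the lemma itself, which only asserts a.s.\ convergence to an $\Exp(1)$ limit. Whether to include your proof or simply cite Kendall, as the authors do, is a matter of exposition; your version has the advantage of making the paper self-contained on this point, which is pleasant given that the companion \thref{lem:tvB} is proved in detail.
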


As for $\B_t$, when $\alpha\in \mathbb N$, we may alternatively view the defective progenitor as $\alpha$ distinct progenitors who each give birth at rate 1. Thus, we may write $\B_t = \B_t'-\alpha +1$ with $\B_t'$ a \emph{pure birth process} (all individuals give birth at rate $1$) with initial condition $\B_0' = \alpha$. Kendall also studied this initial condition in \cite[Theorem I]{kendall1966branching} proving that $e^{-t} \mathcal B_t'$ converges almost surely to a {terminal value} $G_\alpha\sim\text{Gamma}(\alpha,1)$ i.e.\ has density function $e^{-x}x^{\alpha -1}/\Gamma(\alpha)$ for $x > 0$ \cite{kendall1966branching}.  We do not doubt that Kendall could have handled the non-integer case. Nonetheless, we failed to find a reference, so we explain why it follows from the pure birth case.

\begin{lemma}\thlabel{lem:tvB}
    For any $\alpha >0$ it holds that $e^{-t} \mathcal B_t \overset{a.s.} \to G_\alpha$ with $G_\alpha \sim \text{\emph{Gamma}}(\alpha,1)$.
\end{lemma}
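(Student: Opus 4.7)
The plan is to reinterpret $\mathcal{B}_t$ as a linear birth process with constant-rate immigration, then decompose it as a Poisson superposition of pure Yule processes so as to appeal to \thref{lem:tvW} branch by branch. Setting $Z_t := \mathcal{B}_t - 1$, one checks that $Z_0 = 0$ and the jump rate from state $z$ is $z + \alpha$, so $Z_t$ is a pure birth process (rate $1$ per individual) superposed with immigration at constant rate $\alpha$, the defective progenitor now playing the role of the immigration source. Equivalently, if $0 < \tau_1 < \tau_2 < \cdots$ are the points of an independent rate-$\alpha$ Poisson process on $(0,\infty)$ and $(Y^{(i)}_s)_{s \geq 0}$ are independent pure Yule processes of rate $1$ started from one individual, then
\[
Z_t = \sum_{i \geq 1} Y^{(i)}_{t - \tau_i}\,\ind{\tau_i \leq t}.
\]

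Almost sure convergence of $e^{-t}\mathcal{B}_t$ comes cheaply from a martingale argument. A direct generator calculation shows that $M_t := e^{-t}(Z_t + \alpha)$ is a non-negative martingale, so $M_t \to L$ a.s.\ for some $L \geq 0$, and hence $e^{-t}\mathcal{B}_t = e^{-t}(Z_t + 1) \to L$ a.s. It remains to identify the distribution of $L$.

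For the identification, apply \thref{lem:tvW} with $\lambda = 1$ to each Yule branch to get $e^{-s}Y^{(i)}_s \to W_i$ a.s., where the $W_i \sim \Exp(1)$ are iid and independent of $\{\tau_i\}$. The natural candidate is $S := \sum_{i \geq 1} e^{-\tau_i} W_i$, which converges a.s.\ since $\E[S] = \sum_{i \geq 1} \E[e^{-\tau_i}] = \alpha < \infty$. Its Laplace transform is computed by Campbell's formula for the rate-$\alpha$ Poisson process $\{\tau_i\}$:
\[
\E[e^{-uS}] \;=\; \exp\!\left(-\alpha \int_0^\infty \frac{u e^{-\tau}}{1 + u e^{-\tau}}\,d\tau\right) \;=\; (1+u)^{-\alpha},
\]
after the substitution $v = ue^{-\tau}$, and this is precisely the Laplace transform of $\mathrm{Gamma}(\alpha,1)$. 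To match $L$ with $S$ I would check $L^1$-convergence of $e^{-t}Z_t \to S$ using the $L^2$-boundedness of $M_t$ and Fatou applied to the series.

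The main obstacle is this last bookkeeping step: passing from almost sure convergence of each Yule branch to almost sure convergence of the full infinite sum, and matching it with the martingale limit $L$. The uniform integrability afforded by the $L^2$-bounded martingale is what bridges the gap; once this is done, the Campbell integral computation does the rest.
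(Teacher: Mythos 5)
Your proposal is correct and follows essentially the same route as the paper: decompose $\mathcal B_t$ as a Poisson superposition of rate-$1$ Yule processes seeded by a rate-$\alpha$ immigration stream, apply the Kendall terminal-value result branch by branch, and identify the limit law as $\mathrm{Gamma}(\alpha,1)$ via Campbell's formula (the paper phrases this as the probability generating functional of the Poisson process). The nonnegative martingale $M_t = e^{-t}(Z_t+\alpha)$ is a worthwhile addition, since it supplies the almost sure convergence and the uniform integrability needed to justify the interchange of limit and infinite sum, a step the paper's own proof passes over silently.
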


\begin{proof}
     For notational convenience, we will write $\mathcal B_\alpha(t) = \mathcal B_t$. Our starting point is to write 
     $$\B_{\alpha}(t) = \textstyle \sum_{i=1}^{N(t)}\B_i(t-T_i)$$ where $N(t)$ is a Poisson point process with rate $\alpha$, the $T_i$ are its jump times, and the $\B_i$ are independent pure birth processes each starting with one individual. Scaling by $e^{-t}$ and applying \cite[Theorem I]{kendall1966branching} gives
    \begin{align*}
        e^{-t}\B_{\alpha}(t) &= e^{- t}\sum_{i=1}^{N(t)}\B_i(t-T_i) = \sum_{i=1}^{N(t)}  e^{- T_i}e^{- (t-T_i)}\B_i(t-T_i) \overset{a.s.}\to \sum_{i=1}^\infty e^{- T_i} \mathcal E_i,
    \end{align*}
    with the $\mathcal E_i$ independent unit exponential random variables.
    
    It remains to prove that the limiting random sum  is distributed as $G_\alpha$. 
    Let $X:= \sum_{i=1}^\infty e^{- T_i} \mathcal E_i$ and $M_X(s) := \E[e^{-sX}]$ be the Laplace transform. Some algebra gives
$$M_X(s) = \E \prod_{i=1}^\infty\f{e^{ T_i}}{e^{ T_i} + s} .$$
 As $(T_i)$ is a homogeneous Poisson point process with intensity $\alpha$ and the above is its probability generating functional evalauted at the function $e^{x}/(e^x +s)$, we conclude that
    $$M_X(s) = \exp\left(-\alpha\int_0^\infty 1-\f{e^{ x}}{e^{ x} + s} \: dx\right)=(1+s)^{-\alpha}.$$
    The function $(1+s)^{-\alpha}$ is the Laplace transform of $G_\alpha$, which implies the claimed equivalence. 
\end{proof}

\section{Theorem Proofs} \label{sec:proofs}

\subsection{Proof of \thref{thm:W}}
The argument for $\lambda \neq 1$ is the same as the proof of \cite[Theorem 1]{complete}. 
This is because conversion is irrelevant when it comes to extinction since the birth and death processes have altogether different exponential growth rates. 
When $\lambda=1$ the same reasoning as \cite[Theorem 1]{complete} gives that whether or not extinction occurs comes down to comparing terminal values, so by \thref{lem:tvW} and \thref{lem:tvB} we have
$$\lim_{n\to \infty} \P(W(K_{n+1}, 1,\alpha)=0) = \P(G_\alpha < \bE).$$
\thref{lem:G<} shows that $\P(G_\alpha < \bE) = 2^{-\alpha}$.

\subsection{Proof of \thref{thm:C}}

We seek to prove that
\begin{align}\label{eq:lln}
\lim_{n \to \infty} \P( |\C(K_{n+1}, 1, \alpha) - \alpha \log n| > \epsilon \log n) = 0 \text{ for all $\epsilon >0$.}
\end{align}
Recall that $\delta(i)$ are the times of the $i$th death and $\beta(i)$ are the times of the $i$th birth in the death and defective birth processes from Section~\ref{sec:setup}. Let $$\tau = \tau(K_{n+1}, 1, \alpha) := \beta( n \wedge \min \{ i \geq 1 \colon \beta(i) < \delta(i) \})$$
be the time of whichever comes first between the $n$th birth and/or when the birth process first surpasses the death process. A moment's thought reveals that $\tau$ is precisely the time of fixation. Hence,  $\C$ can be coupled with the number of births by the defective progenitor up to time $\tau$. 

Combining \cite[Equation (11) and Lemma 3.1]{complete}, which continue to hold with conversion, yields that
\begin{align} \label{eq:tau}
    \tau(K_{n+1}, 1, \alpha) / \log n \to 1 \text{ in probability as $ n \to \infty$.}
\end{align} 
 As the number of births by the defective progenitor is a Poisson point process with intensity $\alpha$, we obtain \eqref{eq:lln} using standard concentration estimates  and \eqref{eq:tau}.

\subsection{Proof of \thref{thm:EW}}
The event that the initial red site is instantly converted has a non-vanishing contribution to the expected value. Specifically, 
$$\E[W \mid W=n]\P(W=n) = n \f{\alpha}{n+\alpha} \to \alpha.$$
Conditioning on the above event and its complement, then following the proof of \cite[Theorem 5 (ii)]{complete} gives
$$\lim_{n \to \infty} \E[W(K_{n+1},1,\alpha)] = \alpha + \E[Z],$$
where $Z \sim [1+\text{Poisson}((G_\alpha - \bE))]\ind{G_\alpha > \bE}$. As the expected value of a Poisson distribution with random parameter is the expected value of the random parameter, we have
$$\E[Z] = \P(G_\alpha > \bE) + \E[ (G_\alpha - \bE) \ind{G_\alpha > \bE}].$$
The calculation in \thref{lem:EG} shows that
$$\E[ (G_\alpha - \bE) \ind{G_\alpha > \bE}] = \alpha - (1 - 2^{-\alpha}).$$ Since $\P(G_\alpha > \bE)= 1-2^{-\alpha}$ by \thref{lem:G<}, we have $\E[Z] = \alpha$, completing the proof. 

\appendix

\section{Integral calculations}
Let $G_\alpha \sim \text{Gamma}(\alpha,1)$ and $\bE \sim \Exp(1)$ be independent random variables. 
\begin{lemma} \thlabel{lem:G<}
    $\P(G_\gamma < \bE) = 2^{-\alpha}$.
\end{lemma}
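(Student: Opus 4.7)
The plan is to condition on $G_\alpha$ and exploit independence together with the memoryless form of the exponential tail. Concretely, since $\P(\bE > x) = e^{-x}$ for $x \geq 0$ and $G_\alpha \perp \bE$, I would write
$$\P(G_\alpha < \bE) = \E\bigl[\P(\bE > G_\alpha \mid G_\alpha)\bigr] = \E\bigl[e^{-G_\alpha}\bigr],$$
which is precisely the Laplace transform of $G_\alpha$ evaluated at $s = 1$.

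For the second step, I would invoke the formula for the Laplace transform of a $\text{Gamma}(\alpha,1)$ variable, namely $\E[e^{-s G_\alpha}] = (1+s)^{-\alpha}$. This is the very formula that was derived in the proof of \thref{lem:tvB}, so no new computation is needed. Setting $s=1$ gives $\P(G_\alpha < \bE) = 2^{-\alpha}$, as desired.

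If one prefers an entirely self-contained derivation, the alternative is to integrate directly against the Gamma density:
$$\P(G_\alpha < \bE) = \int_0^\infty \frac{y^{\alpha-1} e^{-y}}{\Gamma(\alpha)} \, \P(\bE > y)\, dy = \frac{1}{\Gamma(\alpha)}\int_0^\infty y^{\alpha-1} e^{-2y}\, dy,$$
and then apply the substitution $u = 2y$ to recognize the integral as $\Gamma(\alpha)/2^\alpha$.

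There is no serious obstacle here; the only thing to be careful about is that the Laplace-transform identity applies for all real $\alpha > 0$ and not just integer $\alpha$, which is exactly what \thref{lem:tvB} already established en route to identifying the limiting distribution. So the ``hard part'' was effectively paid for earlier, and the present lemma reduces to a one-line application.
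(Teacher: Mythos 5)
Your proposal is correct and matches the paper's argument: the paper conditions on $G_\alpha$ and evaluates $\frac{1}{\Gamma(\alpha)}\int_0^\infty x^{\alpha-1}e^{-2x}\,dx = 2^{-\alpha}$, which is exactly your self-contained alternative, and your primary route via $\E[e^{-G_\alpha}] = (1+1)^{-\alpha}$ is just the same computation packaged as the Laplace transform. No gaps.
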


\begin{proof}
We have
\begin{align}
\mathbf{P}(G_\alpha<\overline{\mathcal{E}}) =\int_0^\infty \mathbf{P}(\overline{\mathcal{E}}>x)\,f_{G_\alpha}(x)\,dx
&=\int_0^\infty e^{-x}\,\frac{x^{\alpha-1}e^{-x}}{\Gamma(\alpha)}\,dx\\
&=\frac{1}{\Gamma(\alpha)}\int_0^\infty x^{\alpha-1}e^{-2x}\,dx.
\end{align}
Using $\int_0^\infty x^{\alpha-1}e^{-\beta x}\,dx=\Gamma(\alpha)\beta^{-\alpha}$ for $\alpha,\beta>0$,
\[
\mathbf{P}(G_\alpha<\overline{\mathcal{E}})=\frac{\Gamma(\alpha)2^{-\alpha}}{\Gamma(\alpha)}=2^{-\alpha}.
\]

\end{proof}

\begin{lemma} \thlabel{lem:EG}
    $\E[ (G_\alpha - \bE) \ind{G_\alpha > \bE}] = \alpha - (1 - 2^{-\alpha})$
\end{lemma}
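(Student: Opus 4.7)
My plan is a direct calculation using independence and the previous lemma. By independence, conditioning on $G_\alpha = x$ and integrating the density of $\bE$ over $[0,x]$ gives
\begin{align}
\E[(G_\alpha - \bE)\ind{G_\alpha > \bE}] = \int_0^\infty f_{G_\alpha}(x) \int_0^x (x-y) e^{-y}\, dy\, dx,
\end{align}
where $f_{G_\alpha}(x) = x^{\alpha-1} e^{-x}/\Gamma(\alpha)$ is the density of $G_\alpha$. I would first evaluate the inner integral in $y$: splitting as $x\int_0^x e^{-y}\, dy - \int_0^x y e^{-y}\, dy$ and integrating the second piece by parts yields the closed form $x + e^{-x} - 1$.

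Substituting this back and distributing, I reduce the problem to evaluating three integrals against $f_{G_\alpha}$, namely those with integrands $x$, $e^{-x}$, and $1$. The first is $\E[G_\alpha] = \alpha$, the third is $1$, and the middle one, after inserting $e^{-x} = \P(\bE > x)$ and conditioning, equals $\P(G_\alpha < \bE) = 2^{-\alpha}$ by \thref{lem:G<}. Summing gives $\alpha + 2^{-\alpha} - 1 = \alpha - (1 - 2^{-\alpha})$, as claimed.

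There is no genuine obstacle here; the computation is routine single-variable calculus once the double integral is set up. The only mildly slick step is recycling \thref{lem:G<} to dispatch the middle integral, which avoids repeating the substitution $u = 2x$ and the gamma-function identity $\int_0^\infty u^{\alpha-1}e^{-u}\, du = \Gamma(\alpha)$ used in that earlier lemma.
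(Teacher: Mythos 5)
Your proposal is correct and follows essentially the same route as the paper: the same double integral, the same evaluation of the inner integral to $x - 1 + e^{-x}$, and the same reduction to $\E[G_\alpha] - 1 + \E[e^{-G_\alpha}]$. The only (cosmetic) difference is that you obtain $\E[e^{-G_\alpha}] = 2^{-\alpha}$ by recycling \thref{lem:G<} via $e^{-x} = \P(\bE > x)$, whereas the paper quotes the Laplace transform $\E[e^{-sG_\alpha}] = (1+s)^{-\alpha}$ at $s=1$ --- these are the same computation.
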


\begin{proof}
We have
\begin{align}
\mathbf{E}\!\left[(G_\alpha-\overline{\mathcal{E}})\mathbf{1}_{\{G_\alpha>\overline{\mathcal{E}}\}}\right]
&=\iint_{x>y} (x-y)\, f_{G_\alpha}(x)f_{\overline{\mathcal{E}}}(y)\,dy\,dx\\
&=\int_0^\infty\!\!\left(\int_0^x (x-y)e^{-y}\,dy\right) f_{G_\alpha}(x)\,dx.
\end{align}
The inner integral is
\begin{align}
\int_0^x (x-y)e^{-y}\,dy
= x(1-e^{-x})-\!\int_0^x y e^{-y}\,dy
&= x(1-e^{-x})-\big(1-(x+1)e^{-x}\big)\\
&= x-1+e^{-x}.
\end{align}
Hence
\begin{align}
\mathbf{E}\!\left[(G_\alpha-\overline{\mathcal{E}})\mathbf{1}_{\{G_\alpha>\overline{\mathcal{E}}\}}\right]
&= {\mathbf{E}[G_\alpha]}\;-\;1\;+\;\mathbf{E}[e^{-G_\alpha}].
\end{align}
We obtain the claimed formula after noting that $\E[G_\alpha] =\alpha$ and $\mathbf{E}[e^{-G_\alpha}]= 2^{-\alpha}$.
The first equality is a basic fact about the Gamma distribution and the latter uses the Laplace transform $\mathbf{E}[e^{-sG_\alpha}] = (1+s)^{-\alpha}$ at $s=1$.
\end{proof}

\bibliographystyle{amsalpha}

\bibliography{paper.bib}

\newcommand{\etalchar}[1]{$^{#1}$}
\providecommand{\bysame}{\leavevmode\hbox to3em{\hrulefill}\thinspace}
\providecommand{\MR}{\relax\ifhmode\unskip\space\fi MR }
\providecommand{\MRhref}[2]{%
  \href{http://www.ams.org/mathscinet-getitem?mr=#1}{#2}
}
\providecommand{\href}[2]{#2}
\begin{thebibliography}{BBHT{\etalchar{+}}25}

\bibitem[BBHT{\etalchar{+}}25]{bailey2025chase}
Emma Bailey, Erin Beckman, Sara{\'\i} Hern{\'a}ndez-Torres, Matthew Junge, Aanjaneya Kumar, Ann Lee, Danny Li, Alisher Raufov, Lily Reeves, and Omer Rondel, \emph{Chase-escape with conversion as a multiple sclerosis lesion model}, arXiv:2507.21235 (2025).

\bibitem[BCE{\etalchar{+}}21]{beckman2021chase}
Erin Beckman, Keisha Cook, Nicole Eikmeier, Sara\'{i} Hern\'{a}ndez-Torres, and Matthew Junge, \emph{Chase-escape with death on trees}, The Annals of Probability \textbf{49} (2021), no.~5, 2530--2547.

\bibitem[Bor08]{rumor}
Charles Bordenave, \emph{On the birth-and-assassination process, with an application to scotching a rumor in a network}, Electronic Journal of Probability \textbf{13} (2008), 2014--2030.

\bibitem[DJT20]{cande}
Rick Durrett, Matthew Junge, and Si~Tang, \emph{Coexistence in chase-escape}, Electronic Communications in Probability \textbf{25} (2020), 1--14.

\bibitem[HJCW20]{hinsen2020phase}
Alexander Hinsen, Benedikt Jahnel, Elie Cali, and Jean-Philippe Wary, \emph{{Phase transitions for chase-escape models on Poisson–Gilbert graphs}}, Electronic Communications in Probability \textbf{25} (2020).

\bibitem[Ken66]{kendall1966branching}
David~G Kendall, \emph{Branching processes since 1873}, Journal of the London Mathematical Society \textbf{1} (1966), no.~1, 385--406.

\bibitem[KGD21]{kumar2021chase}
Aanjaneya Kumar, Peter Grassberger, and Deepak Dhar, \emph{Chase-escape percolation on the 2d square lattice}, Physica A: Statistical Mechanics and its Applications (2021), 126072.

\bibitem[KL06]{kordzakhi2006two}
George Kordzakhia and Steven Lalley, \emph{A two-species competition model on $\mathbb{Z}^d$}, Stochastic Processes and their Applications \textbf{115} (2006).

\bibitem[Kor15]{complete}
Igor Kortchemski, \emph{A predator-prey {SIR} type dynamics on large complete graphs with three phase transitions}, Stochastic Processes and their Applications \textbf{125} (2015), no.~3, 886 -- 917.

\bibitem[RKW95]{rand1995invasion}
DA~Rand, Matthew Keeling, and HB~Wilson, \emph{Invasion, stability and evolution to criticality in spatially extended, artificial host—pathogen ecologies}, Proceedings of the Royal Society of London. Series B: Biological Sciences \textbf{259} (1995), no.~1354, 55--63.

\end{thebibliography}

\end{document}